\newtheorem{thm}{Theorem}
\newtheorem{lem}{Lemma} 
\begin{document}

\title[The $p$-modular Descent Algebra of the Symmetric Group]{The p-modular
Descent Algebra of the Symmetric Group}  
\author{M. D. Atkinson \and\ S. J. van Willigenburg} 
\maketitle
 
\begin{abstract} The descent algebra of the 
symmetric group, over a field of non-zero characteristic $p$, is studied. A
homomorphism into the algebra of generalised p-modular characters of the 
symmetric group is defined.
This is then used to determine the radical, and its nilpotency index. It 
also allows the irreducible representations of the descent algebra to be 
described.
\end{abstract}

\section{Introduction}

In 1976, Louis Solomon defined a family of algebras associated with Coxeter
groups \cite{Solomon}.  In the case of symmetric groups their 
definition can be expressed as
follows:

If $\sigma$ is any permutation in the symmetric group $S_{n}$ written in image form 
(e.g. [1342])
then the {\em signature} of $\sigma$ is the sequence of signs $\{ x_{i}\}_{i=1}^{n-1}$
where 
\[ x_{i}=\left\{
\begin{array}{cc} +&\mbox{ if }(i+1)^{\sigma}-i^{\sigma}>0\\
-&\mbox{ if }(i+1)^{\sigma}-i^{\sigma}<0 
\end{array}
\right. 
\] 
For example, [1342] has
the signature $\varepsilon = [++-]$. Such signatures partition the $n!$
permutations of $S_{n}$ into $2^{n-1}$ disjoint signature classes, and 
we denote the sum of
all elements in a given signature class, $\varepsilon$, by 
$A_{\varepsilon}$.  Solomon proved that, for any two signatures $\varepsilon$, 
$\eta$, $A_{\varepsilon}A_{\eta}$ is a linear
combination (with non-negative integer coefficients) of signature class sums.
Hence the signature class sums span a sub-algebra of the group algebra of
dimension $2^{n-1}$ which has become known as the {\em descent algebra} $\Sigma_{n}$
\cite{Gandr}.

The algebra $\Sigma_{n}$ is not semi-simple.  Indeed, Solomon proved that 
the dimension of its radical is $2^{n-1}-p(n)$ (where $p(n)$ is the 
partition function). Garsia and Reutenauer,
 in their extensive paper \cite{Gandr}, gave another proof of this 
 result; they also derived other natural bases for $\Sigma_{n}$ and 
 determined the 
Cartan invariants. In other work on $\Sigma_{n}$, Atkinson \cite{Atkinson} defined a family of 
homomorphisms on $\Sigma_{n}$, including an epimorphism from $\Sigma_{n}$ 
to \(\Sigma_{n-1}\), and proved that the 
nilpotency index of the radical is $n-1$; and very recently Gelfand {\em et al} 
\cite{Gelfand}
have used the descent algebra in a key way in their work on non-symmetric 
functions. 
  In all these papers, $\Sigma_{n}$ has
been studied as an algebra over a field of characteristic zero.  However, 
since the structure constants of the algebra are integers, it is also 
possible to define the descent algebra over fields \( {\cal F}_p \) of 
any prime order 
$p$.  For values of $p>n$ all the above results extend 
virtually unchanged but, as we shall see in this paper, $p\leq n$ gives 
rise to a more complicated situation.  In this case the dimension of the 
radical depends on $p$ as well as $n$.  Nevertheless we are able to
 identify the radical (by 
giving a natural basis for it), determine its nilpotency index, and 
describe the irreducible representations of the descent algebra.

It is convenient to work with the alternative definition of $\Sigma_{n}$ 
given below (and justified in  \cite{Gandr}) in which $\Sigma_{n}$ is defined 
by a basis $\{B_q\}$ indexed by compositions of $n$.

If $q = [a_{1},a_{2}, \ldots ,a_{s}]$ and $r = [b_{1},b_{2}, \ldots ,b_{t}]$
are compositions of $n$ we define $S(q,r)$ to be the set of
all $s \times t$ matrices $Z = (z_{ij})$ with non-negative integer entries 
such that
\begin{enumerate}
\item $\sum_{j} z_{ij} = a_{i}$ for each $i = 1,2, \ldots ,s$
\item $\sum_{i} z_{ij} = b_{j}$ for each $j = 1,2, \ldots ,t$
\end{enumerate}

Multiplication in $\Sigma_{n}$ is then defined by the rule
\begin{equation}
B_{q}B_{r} = \sum_{Z \in S(q,r)} B_{[z_{11},z_{12}, \ldots
,z_{1s},z_{21}, \ldots ,z_{2s}, \ldots ,z_{r1}, \ldots ,z_{rs}]}
\label{Bm-ult}
\end{equation}
\label{de-scentalgdef}

\begin{rem}
Due to some $z_{ij}$ possibly being zero, 
\[[z_{11},z_{12}, \ldots ,z_{1s},z_{21}, \ldots ,z_{2s}, \ldots ,z_{r1},
\ldots ,z_{rs}]\]
may not be a composition, but it can be identified with the composition
obtained by omitting zero components and, because of this, the multiplicity of
a basis element $B_{s}$ in the right hand side of Equation ~\ref{Bm-ult} may be
greater than one.
\end{rem}

\begin{ex}
\label{fi-rstmult}
If $n = 4 ,q = [2,2], r = [2,1,1]$ then $S(q,r)$ is the set of matrices
\[
\left(\begin{array}{ccc}
2&0&0\\
0&1&1\\
\end{array}\right)\quad
\left(\begin{array}{ccc}
0&1&1\\
2&0&0\\
\end{array}\right)\quad
\left(\begin{array}{ccc}
1&0&1\\
1&1&0\\
\end{array}\right)\quad
\left(\begin{array}{ccc}
1&1&0\\
1&0&1\\
\end{array}\right)
\]

Hence, \[B_{q}B_{r} = B_{[2,1,1]} + B_{[1,1,2]} + 2B_{[1,1,1,1]}\]
\end{ex}

In order to study the characteristic \(p\) analogue of $\Sigma_{n}$, we define 
${\cal Z}_n$ to be the subring of $\Sigma_{n}$ consisting of all integral 
combinations of the basis elements $\{B_q\}$, and consider its ideal 
${\cal P}_n=p{\cal Z}_n$.  We define $\Sigma(n,p)$ to be the 
quotient ring ${\cal Z}_n/{\cal P}_n$; $\Sigma(n,p)$ is clearly an 
algebra over \( {\cal F}_p \) which we term the \emph{p-modular descent 
algebra}.  Of course, $\Sigma(n,p)$ is the algebra that would arise if the 
field of coefficients in the definition of $\Sigma_{n}$ had been taken as 
\( {\cal F}_p \).

We let \( \rho_1:{\cal Z}_n\rightarrow \Sigma(n,p)\) be the natural 
homomorphism with kernel ${\cal P}_n$ and write 
\(\overline{B}_{q}=\rho_1(B_q)\).  The set \(\{\overline{B}_{q}\}\) is 
obviously a basis for $\Sigma(n,p)$ and, as already implied, the 
multiplication rule for \( \overline{B}_q\overline{B}_r \) is the same as 
for \( B_qB_r \) except that coefficients are reduced modulo $p$.  Thus, 
as a consequence of Example ~\ref{fi-rstmult}, in \( \Sigma(4,2) \), \( 
\overline{B}_{[2,2]}\overline{B}_{[2,1,1]}=\overline{B}_{[2,1,1]}+\overline{B}_{[1,1,2]} \).

Let $q=[a_{1},a_{2},\ldots ,a_{r}]$ be a composition of $n$, let
\( H_q=S_{a_1}\times S_{a_2}\times\ldots\times S_{a_r} \) be the 
corresponding Young subgroup of \( S_n \), let \( 1_q \) be the principal 
character of \( H_q \), and let \( \chi_q=1_q^{S_n} \) be the Young character 
corresponding to $q$.  Then the \( Z- \)module \( G_n \) consisting of 
all integral combinations of \( \{\chi_q\} \) is, by the Mackey formula, 
closed under pointwise product and so has a ring structure.
Solomon \cite{Solomon} proved that the linear map \( \theta:{\cal 
Z}_n\rightarrow G_n \) defined by \( \theta(B_q)=\chi_q \), for all 
compositions $q$, is a homomorphism of rings.  This map was a key tool in 
Solomon's paper; he proved that its kernel \( {\cal 
R}_n \) is 
spanned by all differences \( B_q-B_r \), where $q$ and $r$ induce the 
same partition of $n$, and that \( {\cal R}_n \) is nilpotent.

To extend these results to \( \Sigma(n,p) \) we let \( \rho_2 \) be the 
map defined on generalised characters in \( G_n \) (all of which have 
integral values) which simply reduces the character values modulo $p$, and 
we let \( G(n,p) \) denote the image of \( G_n \) under this map; 
clearly, \( G(n,p) \) is a commutative algebra over \( {\cal F}_p \).  The 
kernel of the composite map \[ {\cal Z}_n \rightarrow G_n \rightarrow 
G(n,p) \] obviously contains \( {\cal P}_n \) and so induces an epimomorphism 
of \( {\cal F}_p-\) algebras
\( \phi:\Sigma(n,p) \rightarrow G(n,p) \) which satisfies
\[ \phi(\rho_1(x))=\rho_2(\theta(x)) \mbox{ for all }x\in\Sigma_n\]

Writing \( \tilde{\chi_q} \) for \( \rho_2(\chi_q) \) we obtain, in 
particular, \( \phi(\overline{B}_q)=\tilde{\chi_q} \).  The homomorphism 
\( \phi \) will enable us to describe \( {\cal R}(n,p) \), the radical of \( 
\Sigma(n,p) \), in a manner similar to the description in \cite{Solomon} 
of the radical of \( \Sigma_n \).

We conclude this section by defining two binary relations on the set of 
compositions which we then use to describe some useful properties of 
the multiplication rule for \( B_qB_r \).

If $q$ and $r$ are compositions of $n$ which differ only in the order of 
their components then we write $q \approx r$.  The relation $q \approx r$ 
is an equivalence relation on the compositions of $n$ with, clearly, 
$p(n)$ equivalence classes.

There is also a partial order relation on the set of compositions.  We 
write $r \preceq q$ if the components of $q$ can be obtained
from the components of $r$ by repeatedly replacing adjacent components by
their sum.

\begin{dfn}
Two matrices are said to be {\em column equivalent}
 if one can be obtained from the other
by permuting the columns.
\end{dfn}

\begin{lem}
Let $B_{q}$ and $B_{r}$ be basis elements of $\Sigma_{n}$ and suppose 
that, in the composition $r$, the number of components equal to $i$ is 
denoted by $t_{i}$. Then 
\begin {enumerate}
\item If the coefficient of \( B_s \) in $B_qB_r$ is non-zero then \( 
s\preceq q \)
\item The coefficient of $B_q$ in the product $B_qB_r$
is a multiple of
\( t_{1}!t_{2}!\ldots t_{n}!\)
and this coefficient depends on the equivalence class of $r$ only
\item If $q \approx r$, the coefficient of $B_q$ in $B_qB_r$ 
is exactly $t_{1}!t_{2}!\ldots t_{n}!$
\end {enumerate}
\label{co-effbqbr} 
\end{lem}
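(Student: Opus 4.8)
The plan is to analyze the matrices $Z \in S(q,r)$ that contribute a copy of $B_q$ (after deleting zero components) to the product $B_qB_r$. Write $q = [a_1,\dots,a_s]$ and $r = [b_1,\dots,b_t]$; these are compositions of $n$ inducing possibly different partitions. A matrix $Z = (z_{ij})$ with row sums $a_i$ and column sums $b_j$ contributes $B_q$ precisely when the word $[z_{11},\dots,z_{1t},z_{21},\dots,z_{2t},\dots,z_{s1},\dots,z_{st}]$, read left to right and with zeros deleted, equals $[a_1,\dots,a_s]$. Since row $i$ sums to $a_i$, the nonzero entries in row $i$ must sum to $a_i$; for the deleted word to reproduce $a_1,\dots,a_s$ in order, each row of $Z$ must contain exactly one nonzero entry, namely $a_i$ itself. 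Thus the contributing matrices are exactly those in which every row is a ``unit row'': row $i$ has $a_i$ in some column $c(i)$ and $0$ elsewhere. The column-sum condition then forces $\sum_{i:\,c(i)=j} a_i = b_j$ for each $j$.

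\textbf{Part (1).} For any $Z \in S(q,r)$ whatsoever, the composition $s$ indexing the resulting basis element is obtained by reading the entries of $Z$ row by row; grouping these entries by rows and summing within each row recovers $[a_1,\dots,a_s] = q$. Hence $s \preceq q$ by the very definition of $\preceq$ (repeatedly replace adjacent components by their sum): the components of $q$ are obtained from those of $s$ by coalescing the blocks coming from each row. This is immediate and needs no case analysis.

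\textbf{Parts (2) and (3).} By the reduction above, the coefficient of $B_q$ in $B_qB_r$ equals the number of functions $c : \{1,\dots,s\} \to \{1,\dots,t\}$ such that for every column $j$ we have $\sum_{i \in c^{-1}(j)} a_i = b_j$. I would count these in two stages. First, each such $c$ determines, for every $j$, a sub-multiset of the $a_i$'s summing to $b_j$; conversely such a choice of sub-multisets (an ordered set partition of the multiset $\{a_1,\dots,a_s\}$ into blocks of sums $b_1,\dots,b_t$) must be realizable as a function $c$, and the number of $c$'s realizing a fixed such combinatorial configuration is exactly $\prod_i t_i'!$ where $t_i'$ counts how the equal parts get distributed --- here is where I must be slightly careful. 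The cleaner bookkeeping: group the positions $\{1,\dots,s\}$ according to the value $a_i$; there are $t_i$ positions with value $i$ (for the parts of $r$; note the statement's $t_i$ refers to $r$'s components, and since $q \approx r$ in part (3) these are the part-multiplicities of the common partition). The symmetric group $\prod_i S_{t_i}$ acts freely on the set of valid functions $c$ by permuting equal-valued positions, and this action does not change the induced multiset-partition; hence the number of valid $c$ is $(\prod_i t_i!)$ times the number of distinct multiset-partitions of $\{a_1,\dots,a_s\}$ into blocks with sums $b_1,\dots,b_t$. This proves (2): divisibility by $t_1!\cdots t_n!$ is clear, and the count depends only on the multiset $\{a_1,\dots,a_s\}$ together with $r$'s part-sizes, i.e.\ only on the $\approx$-class of $r$ (the role of $q$ enters only through its $\approx$-class too, but that is fixed). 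For (3), when $q \approx r$ the multiset $\{a_1,\dots,a_s\}$ equals the multiset $\{b_1,\dots,b_t\}$, so a multiset-partition into blocks with prescribed sums $b_1,\dots,b_t$ must have each block a singleton; there is exactly one such multiset-partition (each $b_j$ sits alone), giving coefficient exactly $\prod_i t_i! = t_1!t_2!\cdots t_n!$.

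\textbf{Anticipated obstacle.} The only delicate point is the freeness and orbit-counting argument in part (2): I must verify that permuting equal-valued rows genuinely gives a free $\prod S_{t_i}$-action on valid functions $c$, and that two functions lie in the same orbit iff they induce the same multiset-partition of $\{a_1,\dots,a_s\}$ into blocks of sums $b_j$ --- the ``if'' direction requires noting that if $c$ and $c'$ induce the same unordered partition-into-blocks then they differ by a permutation of positions preserving values, hence by an element of $\prod S_{t_i}$. Once this orbit-counting bijection is nailed down, divisibility, class-invariance, and the exact value in the $q\approx r$ case all follow formally; no further computation is needed.
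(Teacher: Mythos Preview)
Your Part (1) is correct and more self-contained than the paper's, which simply cites an earlier result.

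Your argument for Part (2) has a genuine error. You correctly reduce the coefficient of $B_q$ to the number of functions $c:\{1,\dots,s\}\to\{1,\dots,t\}$ with $\sum_{i\in c^{-1}(j)}a_i=b_j$, but you then let $\prod_i S_{t_i}$ act by permuting \emph{domain} positions with equal $a$-value. This is incoherent in general: the number of domain positions with $a_i=k$ is the multiplicity of $k$ in $q$, not in $r$, and these differ when $q\not\approx r$. Even if one replaces $t_i$ by the multiplicities $u_i$ of $q$, the resulting pre-composition action is \emph{not} free: whenever $a_i=a_{i'}$ and $c(i)=c(i')$, the transposition $(i\ i')$ fixes $c$. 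For instance, with $q=[1,1,2]$ and $r=[2,2]$ there are exactly two valid $c$ (those with $c^{-1}(1)=\{1,2\}$ and with $c^{-1}(1)=\{3\}$), and swapping domain positions $1,2$ fixes both of them; yet there are two ordered multiset-partitions of $\{1,1,2\}$ into blocks of sum $2$, so your formula would predict a coefficient of $2\cdot 2=4$ rather than $2$.

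The paper's proof repairs this by acting on the \emph{codomain} instead (``column equivalence''): $\prod_i S_{t_i}$ permutes columns $j$ with equal $b_j$, i.e.\ post-composes $c$ with such permutations. This action \emph{is} free: if $\tau\circ c=c$ then $\tau$ fixes the image of $c$, and $c$ is surjective since each $b_j>0$ forces some row to hit column $j$, so $\tau=\mathrm{id}$. Freeness gives divisibility by $\prod_i t_i!$; replacing $r$ by an equivalent composition amounts to a fixed column permutation of the matrices, yielding the class-invariance. For (3), when $q\approx r$ each column as well as each row has exactly one nonzero entry, so the column action is simply transitive and the coefficient is exactly $\prod_i t_i!$. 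Your Part (3) reasoning happens to give the right answer because there $u_i=t_i$ and every block is a singleton (so all $m_{ij}!=1$), but Part (2) genuinely requires the column-side action.
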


\begin{proof}
The first statement follows from Lemma 1.1 of  \cite{Atkinson}.  To 
prove the remaining statements 
let \( q=[a_1,\ldots,a_u] \) and \( r=[b_1,\ldots,b_v] \).  A matrix \( Z 
\in S(q,r) \) which contributes to the coefficient of \( B_q \) in 
\(B_qB_r \) satisfies
\[\sum_{j} z_{ij} = a_{i} \mbox{ and } \sum_{i} z_{ij} = b_{j} \]
and the non-zero entries of the rows of $Z$, if read in serial order, 
yield \( a_1,\ldots, a_u \).  It follows that the $i^{\mbox{th}}$ row of 
$Z$ has a single non-zero entry which is equal to \( a_i \).  Note also 
that, since all 
\( b_j>0 \), every column of $Z$ has at least one non-zero entry.

The set of matrices \( {\cal Q} \) (if any) which satisfy these conditions 
falls into a number of column equivalence classes.  Each of these classes 
has precisely $t_{1}!t_{2}!\ldots t_{n}!$ members since the set of 
columns of one of the matrices in \( {\cal Q} \) with a common sum  may be 
permuted arbitrarily.  Thus the coefficient of \( B_q \) in 
\(B_qB_r \) is indeed a multiple of $t_{1}!t_{2}!\ldots t_{n}!$.  If $s$ 
is some composition equivalent to $r$ the set of matrices that is analogous 
 to \( {\cal Q} \) 
is related to \( {\cal Q} \) by permuting columns.  This proves the second statement.  For the third 
statement we note that, when $q \approx r$, \( {\cal Q} \) consists of 
exactly one column equivalence class since then the matrices will have 
exactly one non-zero entry in each column as well as each row.
\end{proof}
 
Note that the conclusions of Lemma~\ref{co-effbqbr} hold also for basis 
elements \( \overline{B}_q,\overline{B}_r \) of \( \Sigma(n,p) \) except 
that the coefficients in question must be reduced modulo $p$.

\section{The Form of the Radical and the Irreducible Representations of
 \( \Sigma(n,p) \)}

\begin{lem}
 \( G(n,p) \) has dimension  \( g(n,p) \) over \( {\cal F}_p \) where 
 \( g(n,p) \) is 
the number of  conjugacy classes of p-regular elements in \(S_n\).
\label{di-gnp}
\end{lem}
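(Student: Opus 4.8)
The plan is to identify $\dim_{{\cal F}_p}G(n,p)$ with the rank over ${\cal F}_p$ of the ordinary character table of $S_n$, and then to prove that this rank equals the number $g(n,p)$ of $p$-regular classes. As observed in the excerpt, the Kostka matrix expressing the Young characters $\chi_q$ in the ordinary irreducible characters $\chi^\lambda$ is unitriangular, so $G_n$ is the ${\bf Z}$-span of the $\chi^\lambda$; viewing a class function as its vector of values on the $p(n)$ conjugacy classes, $G_n$ is therefore the ${\bf Z}$-lattice spanned by the rows of the character table $X$, and $G(n,p)=\rho_2(G_n)$ is the ${\cal F}_p$-span of the rows of $X$ reduced modulo $p$. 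Hence $\dim_{{\cal F}_p}G(n,p)=\mbox{rank}_{{\cal F}_p}X$.

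First I would cut $X$ down to its $p$-regular columns. Every $\chi\in G_n$ satisfies $\chi(g)\equiv\chi(g')\pmod p$, where $g'$ is the $p'$-part of $g$: it suffices to check this for $\chi=\chi^\lambda$, and writing $g=g'u$ with $u$ a commuting $p$-element, the eigenvalues of $u$ in the representation are $p$-power roots of unity, each congruent to $1$ modulo every prime divisor of $p$ in the ring of algebraic integers; thus $\chi^\lambda(g)-\chi^\lambda(g')$ is an algebraic integer divisible by all such primes, and since it is a rational integer (each element of $S_n$ is conjugate to its like-order powers) it is divisible by $p$. Consequently, modulo $p$ each column of $X$ indexed by a $p$-singular class equals the column indexed by its $p'$-part, so $\mbox{rank}_{{\cal F}_p}X=\mbox{rank}_{{\cal F}_p}\widehat X$, where $\widehat X$ is the $p(n)\times g(n,p)$ submatrix of columns indexed by the $p$-regular classes. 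In particular $\dim_{{\cal F}_p}G(n,p)\le g(n,p)$.

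For the reverse inequality I would factor $\widehat X=D\Phi$ over ${\bf Z}$, with $D$ the $p$-modular decomposition matrix of $S_n$ and $\Phi$ the Brauer character table (both integral, again because elements of $S_n$ are conjugate to their like-order powers), and invoke two standard facts about $\overline{{\cal F}_p}S_n$. First, $\Phi$ is invertible modulo $p$: for a simple module $S$ one has $\mbox{tr}(g\mid S)=\mbox{tr}(g'\mid S)$ since $g^{-1}g'$ acts unipotently, and the trace functions of the distinct simple modules are ${\cal F}_p$-linearly independent by the density theorem, so the $g(n,p)\times g(n,p)$ matrix of their values on the $p$-regular classes, which is $\Phi$ reduced modulo $p$, is nonsingular. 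Second, the rows $\chi^\lambda|_{p\text{-reg}}$ of $\widehat X$ span over ${\bf Z}$ the same lattice as the irreducible Brauer characters $\varphi^\mu$ — equivalently, the reductions modulo $p$ of the Specht modules indexed by $p$-regular partitions form a ${\bf Z}$-basis of the modular Grothendieck group of $S_n$ (James), i.e.\ $D$ has an identity submatrix on the rows indexed by $p$-regular partitions. Granting this, $\widehat X$ and $\Phi$ have the same ${\cal F}_p$-row span, so $\mbox{rank}_{{\cal F}_p}\widehat X=g(n,p)$, and with the previous paragraph $\dim_{{\cal F}_p}G(n,p)=g(n,p)$.

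The whole content sits in this lower bound. The congruence $\chi(g)\equiv\chi(g')$ makes $\le g(n,p)$ immediate, but showing that the $p$-regular columns of the character table stay independent modulo $p$ cannot be done with ordinary orthogonality alone: column orthogonality gives only $\widehat X^{\top}\widehat X=\mbox{diag}(z_\mu)$, whose determinant $\prod_{\mu\ p\text{-reg}}z_\mu$ is generally divisible by $p$ (already $z_{(1^p)}=p!$). The naive elementary substitute is the family of power-sum elements $p_\mu\in G_n$ (the class function equal to $z_\mu$ on the class of cycle-type $\mu$ and zero elsewhere, which lies in $G_n$ because $p_\mu$ is an integral combination of Schur functions): these show the indicator of the class of type $\mu$ lies in $G(n,p)$ whenever $p\nmid z_\mu$, but they give nothing when $p\mid z_\mu$ with $\mu$ still $p$-regular, and it is precisely this gap that forces the appeal to the modular representation theory of $S_n$ — the nonsingularity of the Brauer character table together with the unitriangular shape of the decomposition matrix (equivalently, Brauer's surjectivity of the decomposition map).
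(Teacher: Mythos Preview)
Your argument is correct, but the paper proceeds quite differently and more elementarily. Rather than pass to the irreducible characters $\chi^\lambda$, the paper works directly with the $2^{n-1}\times p(n)$ matrix $M=[m_{q\pi}]$ of values of the Young characters $\tilde\chi_q$ on the conjugacy classes; by definition $\dim G(n,p)=\mathrm{rank}\,M$. The upper bound is the same idea as yours (columns indexed by classes with the same $p'$-part coincide modulo~$p$). For the lower bound, the paper restricts to the $p(n)\times p(n)$ submatrix $N$ whose rows are indexed by one composition from each $\approx$-class (i.e.\ by partitions). Listing the partitions lexicographically, the tabloid formula for permutation characters makes $N$ lower triangular with diagonal entry $t_1!t_2!\cdots t_n!\bmod p$ at $\pi=1^{t_1}2^{t_2}\cdots n^{t_n}$. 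The number of nonzero diagonal entries is the number of partitions with every multiplicity $<p$, which by Glaisher's bijection equals the number of partitions with no part divisible by~$p$, namely $g(n,p)$. Thus $\mathrm{rank}\,M\ge g(n,p)$ without any Brauer characters or decomposition matrices at all. Your route via $\widehat X=D\Phi$, the nonsingularity of $\Phi$ modulo~$p$, and the unitriangularity of $D$ (James) is valid and places the lemma in the general Brauer framework, but your closing remark that the lower bound \emph{forces} an appeal to modular representation theory is not borne out: the paper's tabloid-triangularity argument is entirely elementary and specific to the Young characters, and it is moreover the form of the argument that is reused later (the same diagonal factor $t_1!\cdots t_n!$ reappears in Lemma~\ref{co-effbqbr} and in the description of the radical).
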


\begin{proof}
For each composition $q$ and partition $\pi$ let \( m_{q\pi} \) be the 
value of the character \(\tilde{\chi}_q \) on the conjugacy class of 
elements of \(S_n\) of cycle type $\pi$ and let $M$ be the 
\(2^{n-1}\times p(n)\) matrix \([m_{q\pi}]\).  Then \(\dim G(n,p)=\mbox{rank } 
M\).

If \( \pi_1,\pi_2\) are the partition cycle types of two elements of 
\(S_n\) with the same $p$-regular part then by \S 82 of \cite{Candr} the columns of 
$M$ which correspond to \( \pi_1,\pi_2\) are equal.  Thus \(\mbox{rank } 
M\leq g(n,p)\).

To prove that \(\mbox{rank } M\geq g(n,p)\) we list the rows of $M$ so that the 
first \(p(n)\) rows are indexed by a complete set of inequivalent 
compositions.  We can then consider the \( p(n)\times p(n)\) submatrix 
$N$ consisting of these rows and index them by partitions.  If the 
partitions indexing the rows and columns of $N$ are  listed 
lexicographically then $N$ is a lower triangular matrix; furthermore, if 
\(\pi=1^{t_1}2^{t_2}\ldots n^{t_n} \) is a typical partition then the 
\((\pi,\pi)\) diagonal entry of $N$ is \(t_1!t_2!\ldots t_n!\ \textup{mod}\ p\) 
(which follows from the tabloid method of evaluating permutation characters \cite
{Encyclomath}, p41).  By \cite{Encyclomath} p41 again there are \( g(n,p)
\) non-zero  diagonal entries and so \(\mbox{rank } M\geq g(n,p)\).\end{proof}

\begin{lem}
${\cal R}(n,p)) \subseteq \ker\phi$,  \label{ra-dinker}
\end{lem}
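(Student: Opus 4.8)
The plan is to derive the inclusion from the fact that the image of \( \phi \) is a semisimple algebra, which forces \( \ker\phi \) to contain the radical. First I would observe that \( \Sigma(n,p)/\ker\phi\cong G(n,p) \), since \( \phi \) is an epimorphism of \( {\cal F}_p \)-algebras. So it is enough to show that \( G(n,p) \) is semisimple.

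To see this, recall that each element of \( G_n \) is an integer-valued class function on \( S_n \) and that, as noted in the Introduction, the ring operation on \( G_n \) is pointwise multiplication; reducing values modulo \( p \) therefore exhibits \( G(n,p)=\rho_2(G_n) \) as a subalgebra of the algebra of all \( {\cal F}_p \)-valued functions on the \( p(n) \) conjugacy classes of \( S_n \). That ambient function algebra is a finite direct product of copies of the field \( {\cal F}_p \), hence reduced, so \( G(n,p) \) has no non-zero nilpotent element. Being a finite-dimensional \( {\cal F}_p \)-algebra, \( G(n,p) \) has a nilpotent Jacobson radical, which must therefore be zero; thus \( G(n,p) \) is semisimple.

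Finally, since \( \Sigma(n,p) \) is finite-dimensional, its radical \( {\cal R}(n,p) \) is nilpotent, so \( \phi({\cal R}(n,p)) \) is a nilpotent ideal of \( G(n,p) \); semisimplicity of \( G(n,p) \) forces \( \phi({\cal R}(n,p))=0 \), i.e.\ \( {\cal R}(n,p)\subseteq\ker\phi \). I do not anticipate a serious obstacle; the only point to verify with care is that \( \rho_2 \) is multiplicative on \( G_n \), so that \( G(n,p) \) genuinely sits inside the function algebra as a subalgebra, and this is immediate from the fact that multiplication in \( G_n \) is pointwise. One could, if wanted, push further and combine Lemma~\ref{di-gnp} with the result of \S 82 of \cite{Candr} to identify \( G(n,p) \) with the full algebra of \( {\cal F}_p \)-valued functions on the \( g(n,p) \) classes of \( p \)-regular elements, but that sharper statement is not needed here.
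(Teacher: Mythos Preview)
Your proof is correct and follows essentially the same line as the paper's: the paper simply observes that the image of \(\phi\) is ``a space of functions defined over a field and is therefore semi-simple,'' and then notes that the nilpotent ideal \(\phi({\cal R}(n,p))\) must vanish. You have merely unpacked this reasoning in more detail---embedding \(G(n,p)\) in the product of copies of \({\cal F}_p\), deducing it is reduced and hence semisimple, and making explicit why \(\phi({\cal R}(n,p))\) is a nilpotent ideal---so the arguments coincide.
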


\begin{proof}
The image of \( \phi \) is a space of functions defined over a field and 
is therefore semi-simple. Consequently the two-sided nilpotent ideal \( 
\phi({\cal R}(n,p)) \) must be zero.

\end{proof}

\begin{thm}
$\Sigma(n,p)/{\cal R}(n,p)$ is commutative.
\label{co-mutt}
\end{thm}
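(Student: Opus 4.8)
The plan is to show that the semisimple quotient $\Sigma(n,p)/{\cal R}(n,p)$ is commutative by comparing it with the commutative algebra $G(n,p)$. By Lemma~\ref{ra-dinker} we have ${\cal R}(n,p)\subseteq\ker\phi$, so $\phi$ factors through the quotient, inducing a surjection $\bar\phi:\Sigma(n,p)/{\cal R}(n,p)\twoheadrightarrow G(n,p)$. Since $G(n,p)$ is commutative, it suffices to show that $\bar\phi$ is an isomorphism, equivalently that $\dim_{{\cal F}_p}\Sigma(n,p)/{\cal R}(n,p)\le\dim_{{\cal F}_p}G(n,p)=g(n,p)$ (the last equality being Lemma~\ref{di-gnp}). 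So the target inequality is
\[
\dim\bigl(\Sigma(n,p)/{\cal R}(n,p)\bigr)\le g(n,p).
\]

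To get this bound I would exhibit a large nilpotent ideal inside $\Sigma(n,p)$. Let ${\cal N}$ be the span of all $\overline{B}_q-\overline{B}_r$ with $q\approx r$, together with all $\overline{B}_q$ for which $q=1^{t_1}2^{t_2}\cdots n^{t_n}$ has $t_1!t_2!\cdots t_n!\equiv 0\pmod p$. The first type of generator already spans the kernel of $\theta$ reduced mod $p$, and the counting in the proof of Lemma~\ref{di-gnp} shows that the number of equivalence classes $q\approx r$ whose common partition is $p$-\emph{singular} (i.e. has $t_1!t_2!\cdots t_n!\equiv0$) accounts for exactly $p(n)-g(n,p)$ further dimensions; hence $\dim{\cal N}=2^{n-1}-g(n,p)$, which is the right codimension. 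The work is then to check (a) that ${\cal N}$ is a two-sided ideal and (b) that ${\cal N}$ is nilpotent, since then ${\cal N}\subseteq{\cal R}(n,p)$ forces $\dim\Sigma(n,p)/{\cal R}(n,p)\le g(n,p)$ and we are done.

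For (a) and (b) the engine is Lemma~\ref{co-effbqbr}. Part~(1) of that lemma says $B_qB_r$ is supported on compositions $s\preceq q$; part~(2) says the ``diagonal'' coefficient of $B_q$ in $B_qB_r$ is a multiple of $t_1!\cdots t_n!$ and depends only on the class of $r$; part~(3) pins that coefficient down when $q\approx r$. For the differences $\overline{B}_q-\overline{B}_r$ with $q\approx r$: multiplying on either side, the ``top'' terms (those $s$ with $s\approx q$) cancel because the relevant coefficients depend only on equivalence classes, leaving a combination of strictly smaller terms, which gives both ideal-closure modulo lower terms and a filtration argument for nilpotency. For the generators $\overline{B}_q$ with $t_1!\cdots t_n!\equiv0\pmod p$: on multiplying, the coefficient of any $\overline{B}_s$ with $s\approx q$ is (by part~(2), applied with roles arranged so that $q$ appears on the correct side) a multiple of $t_1!\cdots t_n!$, hence $0$ in ${\cal F}_p$, so again the product drops to strictly $\prec q$ terms. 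Combining, every product of elements of ${\cal N}$ lies in ${\cal N}$ and strictly decreases in the $\preceq$-order on leading terms, so ${\cal N}$ is a nilpotent ideal.

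The main obstacle I anticipate is the bookkeeping in (a): one must be careful that ${\cal N}$ absorbs multiplication on \emph{both} sides. Lemma~\ref{co-effbqbr} is stated for the coefficient of $B_q$ in $B_qB_r$, i.e. the left factor reappearing; for products $B_rB_q$ one needs the analogous statement for the right factor, or an argument via the anti-automorphism/opposite structure, and one has to verify that the ``depends only on the equivalence class'' and ``multiple of $t_1!\cdots t_n!$'' features survive on that side too. A secondary subtlety is confirming the dimension count $\dim{\cal N}=2^{n-1}-g(n,p)$: this rests on the fact that within a single $\approx$-class the differences $\overline{B}_q-\overline{B}_r$ span a space of dimension (size of class $-1$), and that adding in the singleton generators $\overline{B}_q$ for $p$-singular classes exactly replaces that ``$-1$'' by a ``$0$'' contribution to the quotient dimension for those classes — matching $p(n)-g(n,p)$ via the diagonal-entry analysis already done in Lemma~\ref{di-gnp}. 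Once these two points are nailed down, commutativity of $\Sigma(n,p)/{\cal R}(n,p)$ follows immediately from commutativity of $G(n,p)$.
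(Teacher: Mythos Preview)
Your approach is correct in outline but takes a considerably longer route than the paper. The paper's proof is a three-line argument that avoids constructing $\mathcal N$ altogether: since Solomon's kernel $\mathcal R_n\subseteq\mathcal Z_n$ is nilpotent, its image $\rho_1(\mathcal R_n)$ is a nilpotent ideal of $\Sigma(n,p)$ and hence lies in $\mathcal R(n,p)$; therefore $\Sigma(n,p)/\mathcal R(n,p)$ is a quotient of $\mathcal Z_n/\mathcal R_n\cong G_n$, which is commutative. No dimension count, no identification of the radical, no analysis of $p$-singular compositions is needed at this stage.

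What you are sketching is essentially the content of the paper's later Lemma~\ref{di-mr} and Theorem~\ref{ba-sis} (the exact description of $\mathcal R(n,p)$), pushed forward to serve as a proof of commutativity. That is legitimate, and in fact yields more: you would get $\mathcal R(n,p)=\ker\phi$ simultaneously. Your filtration argument for nilpotency is exactly the mechanism the paper uses in Lemma~\ref{ym-r}: right multiplication by a spanning element of $\mathcal N$ kills the ``top'' coefficient by Lemma~\ref{co-effbqbr}(2), so $Y_m\mathcal N\subseteq Y_{m+1}$, and since each $Y_m$ is a right ideal (again by Lemma~\ref{co-effbqbr}(1)) one gets $(\Sigma(n,p)\mathcal N)^n\subseteq Y_{n+1}=0$. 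This shows the \emph{left} ideal generated by $\mathcal N$ is nilpotent, which is already enough to force $\mathcal N\subseteq\mathcal R(n,p)$; so the two-sidedness obstacle you flag is not actually needed, and the worry about an anti-automorphism or a right-hand analogue of Lemma~\ref{co-effbqbr} can be dropped. The trade-off is clear: the paper's argument is much shorter for Theorem~\ref{co-mutt} alone, while yours front-loads the work that the paper defers to Theorem~\ref{ba-sis}.
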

 
\begin{proof}
Since \( {\cal R}_n \) is a nilpotent ideal of \( {\cal Z}_n \), 
\( \rho_1({\cal R}_n) \) is a nilpotent ideal of \( \Sigma(n,p) \), and 
therefore \( \rho_1({\cal R}_n)\subseteq{\cal R}(n,p) \).  Hence there exists 
an ideal \( {\cal S}_n \) of \( \Sigma_n \), the pre-image of \( {\cal 
R}(n,p) \), such that \( {\cal R}_n\subseteq {\cal S}_n \) and \( {\cal 
S}_n/{\cal P}_n \cong {\cal R}(n,p) \).  Since \( \Sigma(n,p) \cong 
{\cal Z}_n/{\cal P}_n \), \(\Sigma(n,p)/{\cal R}(n,p) \cong {\cal 
Z}_n/{\cal S}_n \) is a homomorphic image of \( {\cal Z}_n/{\cal R}_n 
\cong G_n \).  Since the latter ring is commutative the theorem follows.
\end{proof}

 \begin{lem}  Let $\overline{B}_{r}$ be a basis element of  
 $\Sigma(n,p)$.  Then $\overline{B}_{r}$
  is nilpotent if and only if $r$ has a component of multiplicity $p$ or 
  more.
 \end{lem}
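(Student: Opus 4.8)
The plan is to study the linear operator $R_{r}$ on $\Sigma(n,p)$ given by right multiplication by $\overline{B}_{r}$, and to reduce the nilpotence of $\overline{B}_{r}$ to that of $R_{r}$. One implication is trivial: since $R_{r}^{k}(x)=x\overline{B}_{r}^{k}$, if $\overline{B}_{r}^{k}=0$ then $R_{r}^{k}=0$. For the other, if $R_{r}^{k}=0$ then $\overline{B}_{r}^{k+1}=R_{r}^{k}(\overline{B}_{r})=0$; so $\overline{B}_{r}$ is nilpotent if and only if $R_{r}$ is. Next I would list the compositions of $n$ as $r_{1},\dots,r_{N}$ (with $N=2^{n-1}$) in an order refining the partial order $\preceq$, so that $r_{i}\preceq r_{j}$ implies $i\le j$. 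By part~(1) of Lemma~\ref{co-effbqbr}, which applies equally in $\Sigma(n,p)$, $\overline{B}_{r_{j}}\overline{B}_{r}$ is a linear combination of the $\overline{B}_{r_{i}}$ with $r_{i}\preceq r_{j}$, hence with $i\le j$; thus in the basis $\{\overline{B}_{r_{i}}\}$ the matrix of $R_{r}$ is triangular, and $R_{r}$ is nilpotent if and only if all of its diagonal entries vanish, that is, if and only if the coefficient of $\overline{B}_{s}$ in $\overline{B}_{s}\overline{B}_{r}$ is $0$ in ${\cal F}_{p}$ for every composition $s$ of $n$.

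Both directions of the lemma then follow from the remaining parts of Lemma~\ref{co-effbqbr}. Suppose first that $r$ has a component of multiplicity $p$ or more, so that $t_{1}!\,t_{2}!\cdots t_{n}!\equiv 0\pmod{p}$. By part~(2), each diagonal entry of $R_{r}$ is a multiple of $t_{1}!\,t_{2}!\cdots t_{n}!$ modulo $p$, hence is $0$; therefore $R_{r}$, and with it $\overline{B}_{r}$, is nilpotent. Conversely, suppose no component of $r$ has multiplicity as large as $p$, so that $t_{1}!\,t_{2}!\cdots t_{n}!\not\equiv 0\pmod{p}$. Taking $s=r$ (and using $r\approx r$) in part~(3), the diagonal entry of $R_{r}$ indexed by $r$ equals $t_{1}!\,t_{2}!\cdots t_{n}!\bmod p\neq 0$, so $R_{r}$ is not nilpotent, and neither is $\overline{B}_{r}$.

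I expect no real obstacle: the statement is a fairly direct consequence of Lemma~\ref{co-effbqbr}. The only points requiring a little care are the passage between nilpotence of $\overline{B}_{r}$ and of the operator $R_{r}$, and the observation that it is \emph{right} multiplication, not left, that is triangular with respect to $\preceq$.
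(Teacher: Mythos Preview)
Your proof is correct and follows essentially the same approach as the paper: triangularise right multiplication by $\overline{B}_{r}$ via the partial order $\preceq$, and use parts (2) and (3) of Lemma~\ref{co-effbqbr} to decide when the diagonal vanishes. The only cosmetic difference is that the paper restricts the operator to the right ideal $I=\langle\overline{B}_{q}\mid q\preceq r\rangle$ rather than all of $\Sigma(n,p)$, which changes nothing in the argument.
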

 
 \begin{proof}
 Suppose that $r$ has \( t_i \) components equal to $i$.  Set 
 \[I=\langle \overline{B}_q | q \preceq r \rangle \]
 By Lemma~\ref{co-effbqbr}
  $I$ is a right ideal of  $\Sigma(n,p)$ and so right multiplication 
 by \( \overline{B}_r \) induces a linear transformation on $I$.  We 
 consider the matrix of this transformation with respect to the given 
 basis \( \overline{B}_{q_1}\ldots \overline{B}_{q_w} \) of I ordered so 
 that \( q_i \preceq q_j \) implies \( i \leq j \).  This matrix is, by 
 Lemma~\ref{co-effbqbr}, lower triangular with diagonal elements
  all equal to a multiple of \( 
 t_1!t_2!\ldots t_n!\ \textup{mod}\ p \).  Therefore the matrix is nilpotent if and 
 only if one of the multiplicities \( t_i \) is $p$ or more.  If the 
 matrix is not nilpotent then certainly $\overline{B}_{r}$ is not 
 nilpotent.  On the other hand, if the matrix is nilpotent then \( 
 I\overline{B}_r^t=0 \) for some $t$ and so, as \(\overline{B}_{r} \in I \), 
 \( \overline{B}_q^{t+1}=0 \).
 \end{proof}
 
 \begin{lem}
 $\dim {\cal R}(n,p)) \geq \dim\ker\phi$.
 \label{di-mr}
 \end{lem}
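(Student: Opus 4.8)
The plan is to prove that $\ker\phi$ is a nilpotent two‑sided ideal of $\Sigma(n,p)$; since the radical is the largest nilpotent ideal this forces $\ker\phi\subseteq{\cal R}(n,p)$, which gives the inequality (and, with Lemma~\ref{ra-dinker}, the identification ${\cal R}(n,p)=\ker\phi$). As $\phi$ is surjective, Lemma~\ref{di-gnp} gives $\dim\ker\phi=2^{n-1}-g(n,p)$, so what is needed is a nilpotent ideal of exactly this dimension.

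First I would describe $\ker\phi$ explicitly. Since $\chi_q$, hence $\tilde{\chi_q}$, depends only on the $\approx$‑class of $q$, every difference $\overline{B}_q-\overline{B}_r$ with $q\approx r$ lies in $\ker\phi$; these span $\rho_1({\cal R}_n)$, a subspace of dimension $2^{n-1}-p(n)$ (one uses that $G_n\cong{\cal Z}_n/{\cal R}_n$ is free as a $Z$‑module, so ${\cal R}_n$ is a direct summand of ${\cal Z}_n$ and $\dim_{{\cal F}_p}\rho_1({\cal R}_n)=\mbox{rank }{\cal R}_n$). Secondly, if the composition $q$ has $m\geq p$ of its parts equal to a common value, then $\tilde{\chi_q}=0$: evaluating $\chi_q=1_q^{S_n}$ by the tabloid method, $\chi_q(g)$ is the number of $q$‑tabloids fixed by $g$, and the symmetric group $S_m$ permuting the corresponding $m$ rows acts freely on this fixed set and commutes with the action of $S_n$, so $\chi_q(g)$ is divisible by $m!$, and hence by $p$, for every $g$. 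Thus each such $\overline{B}_q$ lies in $\ker\phi$; choosing one of them for each $p$‑singular partition of $n$ (a partition with a part of multiplicity at least $p$) gives $p(n)-g(n,p)$ elements that are linearly independent modulo $\rho_1({\cal R}_n)$, because the $\overline{B}_\pi$ indexed by partitions $\pi$ form a basis of $\Sigma(n,p)/\rho_1({\cal R}_n)$. Hence the span $J$ of the differences together with these $\overline{B}_q$ has dimension $(2^{n-1}-p(n))+(p(n)-g(n,p))=\dim\ker\phi$, so $J=\ker\phi$.

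It then remains to prove that $J$ is nilpotent. As $\rho_1({\cal R}_n)$ is nilpotent (it is the image of the nilpotent ideal ${\cal R}_n$), it suffices to show that $\bar J:=J/\rho_1({\cal R}_n)$ is nilpotent inside $\bar\Sigma:=\Sigma(n,p)/\rho_1({\cal R}_n)$, which is a commutative unital algebra with basis $\{\overline{B}_\pi:\pi\vdash n\}$ in which $\bar J$ is the span of those $\overline{B}_\pi$ with $\pi$ $p$‑singular. List the partitions in an order refining the partial order induced by $\preceq$. By Lemma~\ref{co-effbqbr}, multiplication by $\overline{B}_\pi$ on $\bar\Sigma$ then has a triangular matrix whose diagonal entries are all multiples of $t_1!t_2!\cdots t_n!$, where $t_i$ is the multiplicity of $i$ in $\pi$; when $\pi$ is $p$‑singular this product is divisible by $p$, so the matrix is strictly triangular, hence nilpotent, and therefore every element of $\bar J$ is nilpotent. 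A nil ideal of a finite‑dimensional commutative algebra is nilpotent, so $\bar J^{N}=0$ for some $N$; then $J^{N}\subseteq\rho_1({\cal R}_n)$, so $J$ is nilpotent. Consequently $\ker\phi=J\subseteq{\cal R}(n,p)$ and $\dim{\cal R}(n,p)\geq\dim\ker\phi$.

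I expect the main obstacle to be the middle step: recognising that $\ker\phi$ is precisely the span $J$ of the visibly nilpotent elements. This rests on two facts that must come out right --- the vanishing $\tilde{\chi_q}=0$ for compositions with a part of multiplicity $\geq p$, for which the free $S_m$‑action on tabloids is the clean argument, and the dimension bookkeeping that forces $J$ to exhaust $\ker\phi$ rather than merely lie inside it. Once $\ker\phi=J$ is in hand, the nilpotency is a routine triangular‑matrix argument built on Lemma~\ref{co-effbqbr}.
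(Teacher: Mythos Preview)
Your argument is correct, but it does more work than the paper's and is organised differently. The paper does not try to identify $\ker\phi$ explicitly at this stage. Instead it argues directly that every nilpotent element of $\Sigma(n,p)$ lies in ${\cal R}(n,p)$, because $\Sigma(n,p)/{\cal R}(n,p)$ is commutative (Theorem~\ref{co-mutt}) and semi\-simple, hence has no nilpotents. It then simply exhibits $2^{n-1}-g(n,p)$ linearly independent nilpotent elements: the differences $\overline{B}_q-\overline{B}_r$ with $q\approx r$ (nilpotent because their pre\-images lie in ${\cal R}_n$), together with one $\overline{B}_q$ for each partition having a part of multiplicity $\geq p$ (nilpotent by the preceding lemma, which is exactly your triangular-matrix computation). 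The count $p(n)-g(n,p)$ for the latter uses the Glaisher-type bijection between partitions with a part repeated at least $p$ times and partitions with a part divisible by $p$, which you invoke implicitly as well. This gives $\dim{\cal R}(n,p)\geq 2^{n-1}-g(n,p)=\dim\ker\phi$ without ever checking that these elements lie in $\ker\phi$.

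Your route --- first proving $\tilde{\chi}_q=0$ for $p$\nobreakdash-singular $q$ via the free $S_m$\nobreakdash-action on tabloids, thereby pinning down $\ker\phi=J$, and then showing $J$ is a nilpotent ideal --- is perfectly valid and has the advantage of yielding the explicit description of $\ker\phi$ (the content of Theorem~\ref{ba-sis}) as a byproduct. The paper separates these: Lemma~\ref{di-mr} only needs the nilpotency, and the identification ${\cal R}(n,p)=\ker\phi$ with its spanning set is deduced afterwards in Theorem~\ref{ba-sis} by combining Lemmas~\ref{ra-dinker} and~\ref{di-mr}. So your detour through the character vanishing is not wasted, just front-loaded; the paper's version is shorter here because it leans on Theorem~\ref{co-mutt} rather than reproving commutativity inside the argument.
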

 
 \begin{proof}
 \( \Sigma(n,p)/{\cal R}(n,p) \) is a commutative semi-simple algebra and 
 so contains no non-zero nilpotent elements.  Hence all nilpotent 
 elements of \( \Sigma(n,p) \) are contained in \( {\cal R}(n,p) \).
 
 The elements \( B_q-B_r \) with \( q\approx r \) of \( \Sigma_n \) lie 
 in the radical of \( \Sigma_n \) (\cite{Solomon}, Theorem 3) and so 
 are all nilpotent.  Hence their images \( \overline{B}_q-\overline{B}_r 
 \) are also nilpotent; they span a subspace $U$ of \( {\cal R}(n,p) \) 
 of dimension \( 2^{n-1}-p(n) \).
 
 If $q$ is a composition with a component of multiplicity $p$ or more 
 then every composition $r$ with \( q\approx r \)  also has this 
 property.  We choose a complete set $A$ of inequivalent compositions 
 with this property; clearly the members of $A$ can be put in 1-1 
 correspondence with the set of {\em partitions\ } of $n$ which have a 
 part of multiplicity $p$ or more.  However, it is known that the number 
 of such partitions is the same as the number of partitions which have a 
 part divisible by $p$ \cite{Encyclomath}, p.41, and this number
  is \( p(n)-g(n,p) \).
 
 Finally we note that \( \{\overline{B}_q | q \in A \} \), a set of 
 nilpotent elements, is contained in \( {\cal R}(n,p) \) and is linearly 
 independent of the subspace $U$.  Therefore
 \begin{eqnarray*}
 \dim {\cal R}(n,p) & \geq & 2^{n-1} -p(n)+p(n)-g(n,p)\\
  &=& 2^{n-1}-g(n,p)\\
  &=& \dim \Sigma(n,p) - \dim G(n,p)\\
  &=& \dim \ker \phi
  \end{eqnarray*}
  \end{proof} 
  
 We can now describe \( {\cal R}(n,p) \) exactly.
    
 \begin{thm}
 ${\cal R}(n,p)=\ker\phi$ and is spanned by all
 \( \overline{B}_{q}-\overline{B}_{r} \) with \(  q\approx  r \) together 
 with all \(\overline{B}_{q} \) where $q$\ has a component of
 multiplicity p or more.
 \label{ba-sis}
 \end{thm}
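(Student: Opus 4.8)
The plan is to obtain the theorem as a short bookkeeping consequence of Lemmas~\ref{ra-dinker} and~\ref{di-mr}; almost all of the real content has already been placed in those lemmas. First I would prove the identity \( {\cal R}(n,p)=\ker\phi \) by a dimension count. Lemma~\ref{ra-dinker} gives the inclusion \( {\cal R}(n,p)\subseteq\ker\phi \), hence \( \dim{\cal R}(n,p)\le\dim\ker\phi \); Lemma~\ref{di-mr} gives the reverse inequality \( \dim{\cal R}(n,p)\ge\dim\ker\phi \). Therefore the two subspaces have the same dimension, and since one contains the other they are equal.

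For the spanning assertion I would revisit the explicit collection of nilpotent elements exhibited in the proof of Lemma~\ref{di-mr}. Let \( U \) be the span of all \( \overline{B}_q-\overline{B}_r \) with \( q\approx r \). Reading coordinates relative to the basis \( \{\overline{B}_q\} \), \( U \) is precisely the set of vectors whose coordinate-sum over each \( \approx \)-class vanishes, so \( \dim U = 2^{n-1}-p(n) \). Let \( A \) be a complete set of inequivalent compositions of \( n \) having a component of multiplicity \( p \) or more; as recorded in Lemma~\ref{di-mr}, \( |A| = p(n)-g(n,p) \), and each \( \overline{B}_q \) with \( q\in A \) is nilpotent, hence lies in \( {\cal R}(n,p) \), by the lemma characterising nilpotent basis elements. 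Since the coordinate-sum over the \( \approx \)-class of a given \( q\in A \) recovers, for any vector in \( \langle\overline{B}_q\mid q\in A\rangle \), the coefficient of \( \overline{B}_q \), we get \( \langle\overline{B}_q\mid q\in A\rangle\cap U=0 \). Hence \( W:=U+\langle\overline{B}_q\mid q\in A\rangle \) is a subspace of \( {\cal R}(n,p) \) of dimension \( (2^{n-1}-p(n))+(p(n)-g(n,p))=2^{n-1}-g(n,p)=\dim\ker\phi=\dim{\cal R}(n,p) \), and therefore \( W={\cal R}(n,p) \).

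It remains to identify \( W \) with the span named in the statement. That span plainly contains every difference \( \overline{B}_q-\overline{B}_r \) with \( q\approx r \), so it contains \( U \); and for any composition \( q \) with a component of multiplicity \( p \) or more, choosing \( q'\in A \) with \( q'\approx q \) gives \( \overline{B}_q=\overline{B}_{q'}+(\overline{B}_q-\overline{B}_{q'})\in\langle\overline{B}_{q'}\rangle+U\subseteq W \); so the two spans coincide. (Discarding redundant generators, this also shows the stated set contains a basis of \( {\cal R}(n,p) \).) I do not expect a genuine obstacle at this stage, since the hard work is done in Lemma~\ref{di-mr}; the only point requiring a little care is the dimension arithmetic above, which forces the assembled linearly independent family to exhaust \( {\cal R}(n,p) \) rather than merely sit inside it.
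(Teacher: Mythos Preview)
Your proposal is correct and follows essentially the same route as the paper's proof: combine Lemma~\ref{ra-dinker} (inclusion) with Lemma~\ref{di-mr} (dimension inequality) to get \( {\cal R}(n,p)=\ker\phi \), then use the dimension arithmetic already carried out inside Lemma~\ref{di-mr} to conclude that the exhibited family spans. The only addition you make is the explicit check that enlarging from the set \( A \) of \(\approx\)-representatives to all compositions with a high-multiplicity component does not change the span; the paper leaves this implicit, but it is a sound and minor bookkeeping point.
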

 \begin{proof} Lemma~\ref{ra-dinker} and Lemma~\ref{di-mr} prove that 
 ${\cal R}(n,p)=\ker\phi$.  The proof of Lemma~\ref{di-mr} then shows that
  ${\cal R}(n,p)$ not only contains but is actually spanned by  all
 \( \overline{B}_{q}-\overline{B}_{r} \) with \(  q\approx  r \) together 
 with all \(\overline{B}_{q} \) where $q$\ has a component of
 multiplicity p or more.
 \end{proof}
 
From Theorem~\ref{ba-sis} it follows that \( \dim\Sigma(n,p)/{\cal 
R}(n,p)=g(n,p) \) and so, by Theorem~\ref{co-mutt}, \( \Sigma(n,p) \) has \( 
g(n,p) \) irreducible representations all of which are $1$-dimensional.  
We may describe them as follows.

Let \( \pi \) be any partition of $n$ and $x$ any element of  \( 
\Sigma(n,p) \).  Then \( \phi(x) \) is a $p$-modular character of \( S_n 
\) and we let \( \phi(x)^{\pi} \) be the value of this character on the 
conjugacy class corresponding to \( \pi \).  Define \( 
\lambda_{\pi}:\Sigma(n,p)\rightarrow{\cal F}_p \) by
\[ \lambda_{\pi}(x)=\phi(x)^{\pi}\mbox{ for all }x\in\Sigma(n,p) \]
It follows, since \( \phi \) is a homomorphism and characters of \(S_n\) 
are added and multiplied pointwise, that \( \lambda_{\pi} \) is a 
($1$-dimensional) representation of \( \Sigma(n,p) \).

\( \lambda_{\pi} \) is determined by its values \( 
\phi(\overline{B}_q^{\pi})=\tilde{\chi}^{\pi} \) on the basis of \( 
\Sigma(n,p) \) and, by ordering the basis, we can define a column vector 
\( D^{\pi} \) of these values.  By the proof of Lemma~\ref{di-gnp} the matrix 
whose columns are the vectors \( D^{\pi} \) has rank \( g(n,p) \).  That 
lemma also shows that   the set of $p$-regular partitions  provides a 
suitable set of distinct columns that may be taken to define \( g(n,p)\) 
distinct irreducible representations of \( \Sigma(n,p) \).

\section{The Nilpotency Index of the Radical} 
 
Let \( Y_m \) be the subspace of \( \Sigma(n,p) \) spanned by all \( 
\overline{B}_q \) where $q$ has $m$ or more components (for simplicity of 
notation we omit the reference to the dependency on $n$ and $p$).  Then
\[ \Sigma(n,p) = Y_1 \supseteq Y_2 \supseteq \ldots \supseteq Y_n \supseteq 
Y_{n+1}=0 \]

\begin{lem} \( Y_m {\cal R}(n,p) \subseteq Y_{m+1} \)
\label{ym-r}
\end{lem}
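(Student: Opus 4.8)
The plan is to verify the inclusion on the spanning set of $Y_m$ and on the spanning set of $\mathcal{R}(n,p)$ provided by Theorem~\ref{ba-sis}; by bilinearity of multiplication this suffices. So it is enough to show $\overline{B}_q x \in Y_{m+1}$ whenever $q$ has at least $m$ components and $x$ is one of the standard spanning elements of $\mathcal{R}(n,p)$, namely $x = \overline{B}_s - \overline{B}_t$ with $s \approx t$, or $x = \overline{B}_s$ with $s$ having a component of multiplicity at least $p$. First I would dispose of the easy case in which $q$ has $u \geq m+1$ components: if $r$ has $v$ components, every $Z \in S(q,r)$ has all row sums positive, so each of its $u$ rows contains a nonzero entry, and hence every composition occurring in $B_q B_r$ has at least $u$ components; thus $\overline{B}_q \overline{B}_r \in Y_u \subseteq Y_{m+1}$ for any $r$, so such $q$ contribute nothing.

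The crux is the case where $q = [a_1,\ldots,a_m]$ has exactly $m$ components. Here I would prove that for \emph{every} composition $r$ one has $\overline{B}_q \overline{B}_r \equiv c_r\,\overline{B}_q \pmod{Y_{m+1}}$, where $c_r$ is the coefficient of $B_q$ in $B_q B_r$ read modulo $p$. The point is that a matrix $Z \in S(q,r)$ yields a summand with exactly $m$ components only if each of its $m$ rows has a single nonzero entry; that entry is then forced to equal the row sum $a_i$, and reading the rows in order (deleting zeros) recovers $[a_1,\ldots,a_m] = q$, no matter which columns the entries occupy. Consequently $B_q$ is the only summand of $B_q B_r$ with exactly $m$ components, which gives the displayed congruence (after reduction modulo $p$, using the remark following Lemma~\ref{co-effbqbr}).

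It then remains to feed the two types of spanning element of $\mathcal{R}(n,p)$ into this congruence. If $x = \overline{B}_s$ with $s$ having a component of multiplicity $t_i \geq p$, then by Lemma~\ref{co-effbqbr}(2) the coefficient of $B_q$ in $B_q B_s$ is a multiple of $t_1!\cdots t_n!$ and hence of $p$, so $c_s = 0$ and $\overline{B}_q \overline{B}_s \in Y_{m+1}$. If $x = \overline{B}_s - \overline{B}_t$ with $s \approx t$, then Lemma~\ref{co-effbqbr}(2) gives $c_s = c_t$ because these coefficients depend only on the common equivalence class of $s$ and $t$, so $\overline{B}_q(\overline{B}_s - \overline{B}_t) \equiv (c_s - c_t)\overline{B}_q = 0 \pmod{Y_{m+1}}$. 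Both cases rest on part (2) of Lemma~\ref{co-effbqbr}, which is exactly why that lemma was set up with the divisibility statement and the equivalence-class statement together.

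I do not expect a genuine obstacle here: the only step needing care is the structural claim in the second paragraph, that an $m$-component summand of $B_q B_r$ forces a matrix with one nonzero entry per row and therefore equals $B_q$; once that is pinned down, the rest is a direct appeal to Lemma~\ref{co-effbqbr} and the definition of $Y_{m+1}$.
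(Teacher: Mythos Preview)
Your proposal is correct and follows essentially the same approach as the paper: reduce to spanning elements on both sides, observe that the only possible summand with too few parts is $\overline{B}_q$ itself, and use Lemma~\ref{co-effbqbr}(2) to kill its coefficient in each of the two types of spanning element of $\mathcal{R}(n,p)$. The only cosmetic difference is that the paper handles all $s$ with at least $m$ parts uniformly via Lemma~\ref{co-effbqbr}(1) (since $t\preceq s$ with $t\neq s$ forces strictly more parts), whereas you separate off the case of more than $m$ parts first and then argue the $m$-part case directly from the matrix description; the content is the same.
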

\begin{proof} 
 Let $s$ be a composition with at least $m$ components (so that 
 \( \overline{B}_s\in Y_m \))
  and consider the product \( \overline{B}_sX \) for each of the 
 spanning elements of ${\cal R}(n,p)$ given in Theorem~\ref{ba-sis}.  
 Such a product is, by Lemma \ref {co-effbqbr},
  a linear combination of terms \( \overline{B}_t \) with 
 \( t\preceq s \) but, as we now prove, the term \( \overline{B}_s \) itself 
 occurs with coefficient zero.  There are two cases to consider:
 \begin{enumerate}
 \item \( X=\overline{B}_q-\overline{B}_r, q\approx r \).  By 
 Lemma~\ref{co-effbqbr}, the coefficients of \( \overline{B}_s \) in both 
 \( \overline{B}_s\overline{B}_q \) and \( \overline{B}_s\overline{B}_r 
 \) are equal; thus, in \( \overline{B}_s(\overline{B}_q-\overline{B}_r) \), 
 the coefficient of \( \overline{B}_s \) is zero.
 \item \( X=\overline{B}_r \) where $r$ has \( t_i \) components equal to 
 $i$ with at least one \( t_i \) being $p$ or more.  Again, by
  Lemma~\ref{co-effbqbr} since \( t_1!\ldots  t_n! \) is zero in \( {\cal 
  F}_p \), the coefficient of \( \overline{B}_s \) in \( 
  \overline{B}_s\overline{B}_r \) is zero.
  \end{enumerate}
It now follows that \( Y_m X\subseteq Y_{m+1} \) for all \( X\in {\cal 
R}(n,p) \) and this completes the proof.
\end{proof}

Let \( {\cal T} \) denote the subspace of \( {\cal R}(n,p) \) generated by all 
\( \overline{B}_q-\overline{B}_r \) with \( q\approx r \) (again we omit 
the reference to the dependency on $n$ and $p$).  Since \( {\cal T} \)
is the image of \(R_n\) under the homomorphism \( \rho_1 \), \( {\cal T} \) 
is a nilpotent ideal and therefore is contained in \( {\cal R}(n,p) \).

\begin{lem}
\begin{enumerate}
\item If $n$ is odd or \( p \neq 2 \) then \( {\cal R}(n,p) \subseteq 
Y_2\cap {\cal T}+Y_3 \)
\item If $n$ is even and \( p=2 \) then \( {\cal R}(n,p) \subseteq 
\langle \overline{B}_{[n/2,n/2]}\rangle+
Y_2\cap{\cal T}+Y_3 \)
\end{enumerate}
\label{tw-cases}
\end{lem}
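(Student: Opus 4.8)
The plan is to analyze the spanning set of ${\cal R}(n,p)$ given in Theorem~\ref{ba-sis} element by element, and show that each spanning element lies in the claimed subspace. Recall that ${\cal R}(n,p)$ is spanned by (a) the differences $\overline{B}_q-\overline{B}_r$ with $q\approx r$, and (b) the elements $\overline{B}_q$ where $q$ has a component of multiplicity $p$ or more. For type (a), I first note that $\overline{B}_q-\overline{B}_r$ with $q\approx r$ lies in ${\cal T}$ by definition. If $q$ (equivalently $r$) has at least two components then this difference also lies in $Y_2$, hence in $Y_2\cap{\cal T}$. The only way $q$ can have fewer than two components while admitting a nontrivial $r\approx q$ is if $q=[n]$, but then $r=q$ and the difference is zero. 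So every type~(a) generator lies in $Y_2\cap{\cal T}$.

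Next I would handle the type (b) generators $\overline{B}_q$ where $q$ has a component of multiplicity $p$ or more. Write the multiset of components of $q$. If $q$ has three or more components, then $\overline{B}_q\in Y_3$ and we are done. If $q$ has exactly two components, then for $q$ to have a component of multiplicity $p$ or more we need $p\le 2$, i.e.\ $p=2$, and the two components must be equal: $q=[m,m]$ with $2m=n$, so $n$ is even and $q=[n/2,n/2]$. This is precisely the element singled out in case~2 of the lemma, and it falls outside $Y_3$ and outside $Y_2\cap{\cal T}$ (since ${\cal T}\subseteq\ker\phi$ consists of differences, whereas $\overline{B}_{[n/2,n/2]}$ is a single basis vector). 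If $q$ has exactly one component, $q=[n]$, then $q$ has a component of multiplicity $p$ or more only if $p=1$, which is not prime, so this case does not occur. Finally, the degenerate case where the multiplicity-$p$ component forces $q$ to have very few components must be checked: if $p\ge 3$ then a component of multiplicity $\ge p\ge 3$ already gives $q$ at least three components, so $\overline{B}_q\in Y_3$; thus the exceptional $\overline{B}_{[n/2,n/2]}$ term genuinely only arises when $p=2$ and $n$ is even.

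Assembling these observations: in all cases $p\ne 2$ or $n$ odd, every spanning element of ${\cal R}(n,p)$ lies in $Y_2\cap{\cal T}+Y_3$, giving part~1; when $p=2$ and $n$ is even, the only spanning element not covered by $Y_2\cap{\cal T}+Y_3$ is $\overline{B}_{[n/2,n/2]}$, giving part~2. Since ${\cal R}(n,p)$ is the span of these generators and the right-hand sides are subspaces, the containments follow.

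The main obstacle I anticipate is the bookkeeping around borderline compositions --- making sure no type~(b) generator with exactly two components other than $[n/2,n/2]$ can slip through, and confirming that $[n/2,n/2]$ is truly not already in $Y_2\cap{\cal T}+Y_3$ so that case~2 cannot be strengthened to case~1. The latter point relies on ${\cal T}\subseteq\ker\phi$ together with $\phi(\overline{B}_{[n/2,n/2]})=\tilde\chi_{[n/2,n/2]}\ne 0$, which separates a lone basis vector from the "difference" space ${\cal T}$; one should verify this does not collapse when combined with $Y_3$. Beyond that, everything reduces to the elementary arithmetic of composition lengths versus the prime $p$, and to a direct appeal to Lemma~\ref{co-effbqbr} and Theorem~\ref{ba-sis}.
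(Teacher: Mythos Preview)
Your proposal is correct and follows essentially the same argument as the paper: analyse the spanning set of ${\cal R}(n,p)$ from Theorem~\ref{ba-sis}, place the type~(a) generators in $Y_2\cap{\cal T}$, and place the type~(b) generators in $Y_3$ except for the single exception $q=[n/2,n/2]$ when $p=2$ and $n$ is even. Your final paragraph about whether $\overline{B}_{[n/2,n/2]}$ genuinely lies outside $Y_2\cap{\cal T}+Y_3$ is unnecessary, since the lemma only asserts a containment, not sharpness.
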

\begin{proof}
Consider the spanning set for  \( {\cal R}(n,p) \) given in 
Theorem~\ref{ba-sis}.  An element \( \overline{B}_q-\overline{B}_r \) with 
\( q\approx r \) is non-zero only if $q$ and $r$ have at least $2$ 
components and so such an element belongs to \(Y_2\cap{\cal T} \).

Consider an element \( \overline{B}_q \) where the composition $q$ has a 
component which occurs $p$ times or more.  If $n$ is odd or \( p \neq 2 
\) then $q$ will have at least $3$ components and so \( \overline{B}_q 
\in Y_3 \).  The composition $q$ can have fewer than $3$ components only 
if $p=2$ and \( q=[n/2,n/2] \).  The lemma now follows.
\end{proof}

\begin{lem} If $n$ is even and \( p=2 \) then
\[{\cal R}(n,p)^2 \subseteq Y_3\cap{\cal T}+Y_4 \]
\label{on-case}
\end{lem}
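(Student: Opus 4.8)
The plan is to strip away the products that are obviously in the target subspace and so reduce the lemma to one coefficient computation; throughout write $\beta=\overline{B}_{[n/2,n/2]}$. First, every spanning element of ${\cal R}(n,p)$ listed in Theorem~\ref{ba-sis} involves only compositions with at least two parts, so ${\cal R}(n,p)\subseteq Y_2$ and hence, by Lemma~\ref{ym-r}, ${\cal R}(n,p)^2\subseteq Y_2{\cal R}(n,p)\subseteq Y_3$. Since $Y_4\subseteq Y_3$, an element of $Y_3$ that differs from an element of ${\cal T}$ by a member of $Y_4$ already lies in $(Y_3\cap{\cal T})+Y_4$; thus $Y_3\cap({\cal T}+Y_4)=(Y_3\cap{\cal T})+Y_4$, and it is enough to prove the single containment ${\cal R}(n,p)^2\subseteq{\cal T}+Y_4$.

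Next I would invoke Lemma~\ref{tw-cases} and Theorem~\ref{ba-sis} to write ${\cal R}(n,p)={\cal T}+\langle\beta\rangle+W$, where $W$ is spanned by those $\overline{B}_q$ for which $q$ has at least three components and a part of multiplicity at least $2$; note $\langle\beta\rangle\subseteq{\cal R}(n,p)$ and $W\subseteq Y_3$. Expanding $({\cal T}+\langle\beta\rangle+W)({\cal T}+\langle\beta\rangle+W)$ into its nine subspace products, each term having a factor from ${\cal T}$ stays in ${\cal T}$ because ${\cal T}$ is a two-sided ideal; the products $W\langle\beta\rangle$ and $WW$ lie in $Y_3{\cal R}(n,p)\subseteq Y_4$ by Lemma~\ref{ym-r}; and for $\langle\beta\rangle\langle\beta\rangle=\langle\beta^2\rangle$ I would compute $S([n/2,n/2],[n/2,n/2])$ directly, finding that its matrices are exactly those with rows $(a,b)$ and $(b,a)$ where $a+b=n/2$ — the two with $a=0$ or $b=0$ each contribute $\overline{B}_{[n/2,n/2]}$ and cancel over ${\cal F}_2$, and the rest yield four-part compositions, so $\beta^2\in Y_4$. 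Only the product $\langle\beta\rangle W$ then remains, so the lemma comes down to showing $\beta\,\overline{B}_q\in{\cal T}+Y_4$ whenever $q$ has at least three components and a part of multiplicity $\geq 2$.

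For this I would write $\beta\,\overline{B}_q=\sum_{Z\in S([n/2,n/2],q)}\overline{B}_{c(Z)}$, where $c(Z)$ is obtained by concatenating the two rows of $Z$ and deleting zeros; $c(Z)$ has exactly as many parts as $Z$ has nonzero entries, so the contributions with four or more parts lie in $Y_4$. Since every column sum of $Z$ is positive, $Z$ has at least as many nonzero entries as $q$ has components, so a three-part contribution forces $q$ to have exactly three components and $Z$ to have one nonzero entry in each column; as both row sums equal $n/2>0$, those three entries split between the rows as $1+2$ or $2+1$, the singleton entry is equal to its (sole) column sum and hence to $n/2$, and so such a $Z$ exists only when some component $q_j$ of $q$ equals $n/2$, in which case there is exactly one $Z$ for each of the two splits. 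The two compositions that arise are $s_A=[n/2,q_a,q_b]$ and $s_B=[q_a,q_b,n/2]$, where $q_a,q_b$ are the remaining two components of $q$; these have the same multiset of parts, so $s_A\approx s_B$, and over ${\cal F}_2$ the three-part contribution $\overline{B}_{s_A}+\overline{B}_{s_B}=\overline{B}_{s_A}-\overline{B}_{s_B}$ lies in ${\cal T}$. There is also no contribution with one or two parts, since a two-part term would force $q$ to have at most two components. Hence $\beta\,\overline{B}_q\in{\cal T}+Y_4$, as required.

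The main obstacle will be this final step: the book-keeping that pins down precisely which matrices in $S([n/2,n/2],q)$ give a three-part composition, and the observation that the resulting (at most two) compositions form an $\approx$-pair, so that their sum falls into ${\cal T}$. The rest is a routine expansion using that ${\cal T}$ is an ideal and that $Y_3{\cal R}(n,p)\subseteq Y_4$, plus the brief computation of $\beta^2$.
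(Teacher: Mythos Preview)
Your proof is correct and follows essentially the same line as the paper's: reduce to analysing products of the form $\beta\,\overline{B}_q$ and then sort the matrices in $S([n/2,n/2],q)$ by their number of nonzero entries, observing that the three-entry matrices come in $\approx$-paired couples (your $s_A,s_B$ are exactly the row-swapped pair $Z,\bar Z$ in the paper). The only differences are organisational---you first secure ${\cal R}(n,p)^2\subseteq Y_3$ and use the modular law, and you treat $\beta^2$ separately from $\beta W$, whereas the paper invokes Lemma~\ref{tw-cases} directly and handles all $\overline{B}_q$ with a repeated part (including $q=[n/2,n/2]$) in one uniform row-swap argument.
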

\begin{proof}
By Lemma~\ref{ym-r} and Lemma~\ref{tw-cases} 
\[ {\cal R}(n,p)^2 \subseteq 
\langle \overline{B}_{[n/2,n/2]}\rangle {\cal R}(n,p)+
Y_3\cap{\cal T}+Y_4 \]
and so it is sufficient to prove that all products
\(\overline{B}_{[n/2,n/2]}X\) lie in \(Y_3\cap{\cal T}+Y_4 \)
where $X$ runs through the spanning set of \({\cal R}(n,p) \) given in 
Theorem~\ref{ba-sis}.  If \( X\in {\cal T} \) then, as \( 
\overline{B}_{[n/2,n/2]}\in Y_2 \) and \( {\cal T} \) is a two-sided ideal, \( 
\overline{B}_{[n/2,n/2]}X\in Y_3\cap {\cal T} \).

Suppose that \( X=\overline{B}_q\) where \(q=[a_1,\ldots,a_r]\) has a 
repeated part. Then \(\overline{B}_{[n/2,n/2]}X\) is a sum of elements 
\(\overline{B}_s\), one for each \(2 \times r \) matrix $Z$ in \(S([n/2,n/2],q)\).  
If such a 
matrix $Z$ has $4$ or more non-zero entries then it contributes a summand 
\(\overline{B}_s \in Y_4 \).  If it has $3$  non-zero entries then its two 
rows will not be equal and it may be paired with the matrix \( \bar{Z} \) 
obtained from \( Z \) by interchanging the rows.  This pair of matrices 
contributes a summand \( \overline{B}_u+\overline{B}_v \) with \( u\approx v \) which lies in 
\(Y_3\cap{\cal T} \).  Finally, if $Z$ has 2 non-zero entries only it 
will have 
one of two possible forms each of which contributes a summand
 \( \overline{B}_{[n/2,n/2]}\); since \( p=2 \) this contribution is zero.
\end{proof}

We can now give the main result of this section.

 \begin{thm}
 If \( n\geq 3 \)  the nilpotency index of ${\cal R}(n,p)$ is
  \( n-1 \).  
 \end{thm}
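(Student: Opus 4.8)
The plan is to prove the two inequalities separately: that the nilpotency index of \({\cal R}(n,p)\) is at most \(n-1\), and that it is at least \(n-1\). The upper bound will come from combining the filtration \(\Sigma(n,p)=Y_1\supseteq Y_2\supseteq\cdots\supseteq Y_{n+1}=0\) with the two-sided nilpotent ideal \({\cal T}\). First note that \({\cal R}(n,p)\subseteq Y_2\): the only composition of \(n\) with a single part is \([n]\), and \(\overline B_{[n]}\) is not among the spanning elements of \({\cal R}(n,p)\) listed in Theorem~\ref{ba-sis}, so \({\cal R}(n,p)\subseteq Y_2\). I would then prove, by induction on \(k\), that
\[ {\cal R}(n,p)^k\subseteq(Y_{k+1}\cap{\cal T})+Y_{k+2}, \]
the induction starting at \(k=1\) when \(n\) is odd or \(p\neq2\) (base case Lemma~\ref{tw-cases}(1)) and at \(k=2\) when \(n\) is even and \(p=2\) (base case Lemma~\ref{on-case}).

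For the inductive step I write \({\cal R}(n,p)^{k+1}={\cal R}(n,p)^{k}\,{\cal R}(n,p)\) and apply the hypothesis: the summand \((Y_{k+1}\cap{\cal T})\,{\cal R}(n,p)\) lies in \(Y_{k+2}\) by Lemma~\ref{ym-r} and in \({\cal T}\) because \({\cal T}\) is a two-sided ideal, so it lies in \(Y_{k+2}\cap{\cal T}\); and \(Y_{k+2}\,{\cal R}(n,p)\subseteq Y_{k+3}\) again by Lemma~\ref{ym-r}. Since \(n\ge3\), the induction reaches \(k=n-1\) (this value is at least \(1\) always, and at least \(2\) in the case \(p=2\) with \(n\) even, where \(n\ge4\)), giving
\[ {\cal R}(n,p)^{n-1}\subseteq(Y_n\cap{\cal T})+Y_{n+1}=Y_n\cap{\cal T}. \]
But \(Y_n=\langle\overline B_{[1^n]}\rangle\), because a composition of \(n\) into at least \(n\) parts consists entirely of \(1\)'s; and \([1^n]\) is the unique composition in its \(\approx\)-class, so every spanning element \(\overline B_q-\overline B_r\) \((q\approx r)\) of \({\cal T}\) has coefficient \(0\) on \(\overline B_{[1^n]}\). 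Hence \(Y_n\cap{\cal T}=0\), so \({\cal R}(n,p)^{n-1}=0\) and the nilpotency index is at most \(n-1\).

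For the lower bound it suffices to exhibit \(X_1,\dots,X_{n-2}\in{\cal R}(n,p)\) with \(X_1\cdots X_{n-2}\neq0\). The natural candidates are the differences \(\overline B_q-\overline B_r\) with \(q\approx r\), which lie in \({\cal T}\subseteq{\cal R}(n,p)\); I would use a suitable product of \(n-2\) of these, for instance the power \((\overline B_{[n-1,1]}-\overline B_{[1,n-1]})^{n-2}\). Since \({\cal T}=\rho_1({\cal R}_n)\), this reduces to showing that the corresponding element of \({\cal R}_n^{\,n-2}\subseteq{\cal Z}_n\) is not divisible by \(p\); establishing that one of its basis coefficients is \(\pm1\) would settle every prime \(p\) at once. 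The case \(n=3\) is immediate (\({\cal R}(n,p)\neq0\)), and for \(n=4\) one computes
\[ (B_{[3,1]}-B_{[1,3]})^2=B_{[2,1,1]}+B_{[1,1,2]}-2B_{[1,2,1]}, \]
whose coefficient \(\pm1\) on \(B_{[2,1,1]}\) survives modulo every \(p\). I expect this verification to be the main obstacle: one must show that the chosen length \(n-2\) product in the integral descent algebra is nonzero --- essentially Atkinson's characteristic zero result \cite{Atkinson} that the radical of \(\Sigma_n\) has nilpotency index \(n-1\) --- and that it has a unit coefficient. I would attack it by tracking the \(\preceq\)-leading term of each partial product: Lemma~\ref{co-effbqbr} pins down the coefficients of the \(\preceq\)-maximal terms \(B_s\) in each product \(B_qB_r\), and iterating through the \(n-2\) factors should leave a tracked coefficient equal to \(\pm1\). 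Together with the upper bound this gives that the nilpotency index of \({\cal R}(n,p)\) is exactly \(n-1\) for \(n\ge3\).
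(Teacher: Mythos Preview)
Your upper-bound argument is exactly the paper's, recast as an induction on $k$; the paper compresses the same chain of inclusions via Lemmas~\ref{ym-r}, \ref{tw-cases}, \ref{on-case} and finishes with the identical observation that $Y_{n+1}=0$ and $Y_n\cap{\cal T}=0$.

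For the lower bound you have also selected the paper's element, $w=B_{[1,n-1]}-B_{[n-1,1]}$, and correctly reduced the question to exhibiting a unit coefficient in $w^{n-2}\in{\cal Z}_n$. The paper does not attempt to track leading terms; it simply quotes from \cite{Atkinson} the closed formula
\[ w^{r}=\sum_{k=0}^{r}(-1)^{k}{r\choose k}\,B_{[1^{r-k},\,n-r,\,1^{k}]}, \]
so that at $r=n-2$ the summand $k=0$ gives coefficient $1$ on $B_{[1^{n-2},2]}$, whence $w^{n-2}\notin{\cal P}_n$. Your proposed route through Lemma~\ref{co-effbqbr} does not work as stated: that lemma only pins down the coefficient of $B_q$ itself in a product $B_qB_r$, and by its part~2 this coefficient depends only on the $\approx$-class of $r$. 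Since $[1,n-1]\approx[n-1,1]$, the coefficient of $B_q$ in $B_q\,w$ is zero for every $q$, so the ``$\preceq$-leading term'' you propose to track is killed at the very first multiplication, and Lemma~\ref{co-effbqbr} says nothing about the strictly smaller surviving terms. A separate, more detailed calculation (or the citation to \cite{Atkinson}) is genuinely required here.
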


\begin{proof}  In the proof of Corollary 3.5 of  \cite{Atkinson} it was proved that, 
 if \( w=B_{[1,n-1]}-B_{[n-1,1]} \) and \( D(a,b)=B_{[1^a,n-a-b,1^b]} \) then 
 \[ w^{r}=\sum^{r}_{k=0} (-1)^{k}{r\choose k} D(r-k,k) \]
 In particular, \( w^{n-2} \not\in {\cal P}_n \) so that \( x=\rho_1(w) 
 \) is an element of \( {\cal R}(n,p) \) and \( x^{n-2}\not =0 \).  
 Therefore the nilpotency index of \( {\cal R}(n,p) \) is not less than 
 \( n-1 \).

To prove that the nilpotency index is no more than $n-1$ we consider two 
cases.  First, suppose that either $n$ is odd or \( p \neq 2 \).  Then
 Lemma~\ref{ym-r} and
Lemma~\ref{tw-cases} show that
\begin{eqnarray*}
{\cal R}(n,p)^{n-1} & \subseteq & (Y_2 \cap {\cal T}){\cal R}(n,p)^{n-2}+
Y_3{\cal R}(n,p)^{n-2} \\
& \subseteq & Y_n \cap {\cal T} + Y_{n+1}
\end{eqnarray*}
On the other hand, if $n$ is even and \( p=2 \), Lemma~\ref{ym-r} and 
Lemma~\ref{on-case} show that
\begin{eqnarray*}
{\cal R}(n,p)^{n-1}&=&{\cal R}(n,p)^2{\cal R}(n,p)^{n-3}\\
&\subseteq & (Y_3\cap{\cal T}){\cal R}(n,p)^{n-3}+Y_4{\cal R}(n,p)^{n-3}\\
&\subseteq & Y_n \cap {\cal T} + Y_{n+1}
\end{eqnarray*}
 However, since \( Y_{n+1}=0 \) and \( Y_n \cap {\cal T} =0 \), the 
result now follows.
\end{proof}

\begin{rem}  By direct calculation we see that \( {\cal R}(1,p)=0 \) and 
that \( {\cal R}(2,p) = \langle\overline{B}_{[1,1]}\rangle \) (so has nilpotency index $2$).
\end{rem}

\affiliationone{Mathematical Institute\\ University of St. Andrews\\ North Haugh\\ St Andrews\\
Fife KY16 9SS, UK} 

\end{document}